\documentclass[a4paper,reqno,10pt]{amsart}

\usepackage[
  colorlinks=true,
  linkcolor=blue,
  citecolor=blue,
  urlcolor=blue
]{hyperref}
\usepackage{a4wide}
\usepackage{amssymb,amstext,amsmath,amsthm,amsfonts,mathtools}
\usepackage{enumitem}
\usepackage{tikz-cd}
\setlist[enumerate,1]{label={\upshape(\arabic*)}}
\setlist[enumerate,2]{label={\upshape(\alph*)}}

\newtheorem{theorem}{Theorem}[section]
\newtheorem{proposition}[theorem]{Proposition}
\newtheorem{lemma}[theorem]{Lemma}

\newtheorem{conjecture}[theorem]{Conjecture}
\newtheorem*{theoremA}{Theorem A}
\theoremstyle{definition}
\newtheorem{remark}[theorem]{Remark}
\newtheorem*{organization}{Organization}
\newtheorem*{conventions}{Conventions and notation}
\newtheorem*{aiuse}{Use of AI}
\numberwithin{equation}{section}

\newcommand{\kfield}{k}
\newcommand{\Hom}{\operatorname{Hom}}
\newcommand{\rad}{\operatorname{rad}}
\newcommand{\Aut}{\operatorname{Aut}}
\newcommand{\im}{\operatorname{Im}}
\newcommand{\op}{\mathrm{op}}
\newcommand{\ee}{\mathrm e}

\title[The periodicity conjecture]
{A counterexample to the periodicity conjecture\\
for finite-dimensional algebras}

\author[H.\ Enomoto]{Haruhisa Enomoto}
\address{Parakeet Inc., Japan}
\email{haruhisa.enomoto.math@gmail.com}

\subjclass[2020]{16E05, 16E40, 16G10}
\keywords{periodicity conjecture, periodic algebra, twisted periodic algebra,
syzygy}
\date{\today}

\begin{document}

\begin{abstract}
The periodicity conjecture asks whether a finite-dimensional algebra is
periodic whenever all its simple modules are periodic. We construct a
$36$-dimensional counterexample whose simple modules have period four.
The algebra has a $2$-dimensional nonperiodic module, which shows that
the algebra itself is not periodic.
\end{abstract}

\maketitle
\tableofcontents
\enlargethispage{5pt}

\section{Introduction}
\label{sec:introduction}

Let $A$ be a finite-dimensional algebra over a field $\kfield$. An
$A$-module $M$ is \emph{periodic} if $\Omega_A^n(M)\cong M$ for some $n>0$;
the least such $n$ is called the \emph{period} of $M$. Put
$A^{\ee}=A\otimes_\kfield A^{\op}$. The algebra $A$ is \emph{periodic} if
\[
 \Omega_{A^{\ee}}^n(A)\cong A
\]
as $A$-bimodules for some $n>0$. It is \emph{twisted periodic} if
\[
 \Omega_{A^{\ee}}^n(A)\cong {}_1A_\varphi
\]
for some $n>0$ and some $\varphi\in\Aut_\kfield(A)$. The bimodule
${}_1A_\varphi$ has the usual left action and right action
$x\cdot a=x\varphi(a)$. The periodicity conjecture asserts the following.

\begin{conjecture}[Periodicity conjecture {\cite[Introduction]{ChanDarpoIyamaMarczinzik}}]
\label{conj:periodicity}
Every finite-dimensional twisted periodic algebra over a field is periodic.
\end{conjecture}

For a finite-dimensional indecomposable algebra over an algebraically closed
field, Green--Snashall--Solberg \cite[Theorem~1.4]{GreenSnashallSolberg}
prove that twisted periodicity is equivalent to periodicity of every simple
module. Thus, in this setting, Conjecture~\ref{conj:periodicity} asks whether
periodicity of all simple modules implies periodicity of the algebra,
as formulated in \cite[p.~283]{ErdmannSkowronski}.

We give a counterexample to the periodicity conjecture. For each nonzero scalar $q$, we define an indecomposable algebra
$\Lambda(q)$ by an oriented triangle with two parallel arrows on each edge
and relations depending on $q$. Our main result is the following.

\begin{theoremA}[= Theorem~\ref{thm:main}]
Let $\kfield$ be an algebraically closed field containing an element $q$ of
infinite multiplicative order. The $36$-dimensional $\kfield$-algebra
$\Lambda(q)$ defined in Section~\ref{sec:algebra} has the following
properties: every simple right $\Lambda(q)$-module has period four, and
$\Lambda(q)$ is twisted periodic but not periodic. In particular, the
periodicity conjecture is false.
\end{theoremA}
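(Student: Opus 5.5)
We first compute minimal projective resolutions of the three simple right modules $S_1,S_2,S_3$ of $\Lambda(q)$, where the vertices $1,2,3$ form the oriented triangle. Let $P_i=e_i\Lambda(q)$. Since $\Lambda(q)$ has dimension $36$ and the relations are homogeneous, we expect it to be self-injective. The natural guess is that each $P_i$ has Loewy length four with radical layers of dimensions $1,2,2,1$, which gives $\dim P_i=6$, and that the socle of $P_i$ lies at a vertex determined by a Nakayama permutation. The resolution should have the shape
\[
0\to S_{\nu(i)}\to P_{\nu(i)}\to P_{i+2}^{2}\to P_{i+1}^{2}\to P_i\to S_i\to 0 ,
\]
so that $\Omega^4(S_i)\cong S_{\nu'(i)}$ for a permutation $\nu'$. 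We check exactness by explicit basis computations. The first syzygy is $\rad P_i=\alpha\Lambda+\beta\Lambda$, where $\alpha,\beta$ are the two parallel arrows leaving $i$. The second syzygy is spanned by the relation elements. The third syzygy is cyclic, generated by a ``Koszul dual'' element that is $q$-twisted. Its cyclicity and simplicity of the next step follow from the relations and a dimension count. We then arrange $\Omega^4(S_i)\cong S_i$, possibly after composing the permutation around the triangle. If the triangle induces a cyclic shift, the period of $S_i$ is a multiple of $4$; in that case we adjust by showing that the socle of $P_i$ sits at vertex $i$, which gives period exactly four. This also shows that no smaller power returns $S_i$, because $\Omega^1,\Omega^2,\Omega^3$ have the wrong dimension or top.

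From the periodicity of the simple modules, twisted periodicity follows by Green--Snashall--Solberg \cite[Theorem~1.4]{GreenSnashallSolberg}, since $\Lambda(q)$ is indecomposable and $k$ is algebraically closed. Concretely, we expect $\Omega^4_{\Lambda^{\ee}}(\Lambda)\cong {}_1\Lambda_{\varphi}$, where $\varphi$ is the automorphism fixing the vertices and scaling the arrows by suitable powers of $q$, for instance $\alpha\mapsto q\alpha$ and $\beta\mapsto q^{-1}\beta$. We can write the bimodule resolution explicitly, following the shape of the one-sided resolutions, to identify $\varphi$. This is useful for the non-periodicity step.

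For non-periodicity we argue by contradiction. If $\Omega^n_{\Lambda^{\ee}}(\Lambda)\cong\Lambda$, then tensoring the bimodule resolution with any module $M$ gives $\Omega^n(M)\cong M$ up to projective summands. So every non-projective module would be periodic of period dividing $n$. Now take the $2$-dimensional module $M_\lambda$ at the arrow pair $\alpha,\beta\colon i\to i+1$ given by $\alpha\mapsto1$, $\beta\mapsto\lambda$. Since $\Omega^{4m}(M)\cong M\otimes_\Lambda{}_1\Lambda_{\varphi^m}$ up to projectives, we get $\Omega^{4m}(M_\lambda)\cong M_{q^{2m}\lambda}$ or a similar rescaling. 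Because $q$ has infinite order, these modules are pairwise non-isomorphic for $\lambda\neq0$. The odd syzygies $\Omega^{4m+j}$ cannot equal $M_\lambda$ either, by comparing dimension vectors, since $\Omega$ of a $2$-dimensional module has a different top or dimension. Hence $M_\lambda$ is not periodic, and therefore $\Lambda(q)$ is not periodic.

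The main obstacle is twofold. First, we must show that $\varphi^m$ is never inner in the relevant sense. Second, we must verify the isomorphism $M\otimes{}_1\Lambda_{\varphi}\cong M_{q^{2}\lambda}$ and its compatibility with $\Omega^4$ on $M_\lambda$. Directly computing $\Omega^4(M_\lambda)$ from its explicit projective resolution is the safest route, and that is what we would try first.
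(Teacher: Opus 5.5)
You have the same architecture as the paper: resolutions of shape $P_i\leftarrow P_{i+1}^{2}\leftarrow P_{i+2}^{2}\leftarrow P_i$, Green--Snashall--Solberg for twisted periodicity, a two-dimensional module on one pair of parallel arrows, and the fact that over a periodic algebra every indecomposable nonprojective module is periodic. The gap is that the step which actually makes $\Lambda(q)$ a counterexample, the drift of the parameter under syzygies, is only asserted.

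\textbf{The $\varphi$ route cannot supply the drift.} Theorem~\ref{thm:gss} only gives the existence of some $\varphi$. Your candidate is not an algebra homomorphism: under $\alpha\mapsto q\alpha$, $\beta\mapsto q^{-1}\beta$, the left side of \eqref{eq:alpha-relation} scales by $q^{2}$ and the right side by $q^{-1}$. More generally, a diagonal rescaling $\alpha_i\mapsto a_i\alpha_i$, $\beta_i\mapsto b_i\beta_i$ must satisfy $a_1a_2a_3=b_1b_2b_3=1$. Pinning down $\varphi$ would require building the bimodule resolution. That leaves the direct computation you defer, which is the core of the paper, and one step of it already suffices. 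For the paper's $U_\lambda$ (your $M_{\lambda^{-1}}$ at vertex $1$):
\begin{enumerate}
\item The projective cover is $P_1\to U_\lambda$, and its kernel is the image of left multiplication $h$ by $\alpha_1-\lambda\beta_1$ on $P_2$.
\item $\ker h$ is spanned by $z=\beta_2\beta_3+q\lambda\alpha_2\alpha_3=p_{2,5}+q\lambda p'_{2,5}$ and $\omega_2$.
\item One computes $z\alpha_1=q\lambda\omega_2$ and $z\beta_1=\omega_2$, so $\Omega^2(U_\lambda)\cong U_{q\lambda}$.
\end{enumerate}
The factor $q$ comes from $p'_{1,6}=q\,\omega_1$: the two length-six cycles at vertex $1$ differ by $q$.

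\textbf{Your picture of the algebra is wrong.} The relations are not homogeneous; they equate paths of lengths $2$ and $5$. In fact $P_i$ is $12$-dimensional of Loewy length $7$, with radical layers $1,2,2,2,2,2,1$. Your layers $1,2,2,1$ would give $\dim\Lambda(q)=18$, contradicting the stated $36$. Every rank computation needs the path basis of Proposition~\ref{prop:path-basis}: the ranks $11,13,11$ of the differentials in the simple resolutions, and the rank $10$ of $h$. The independence half of that basis requires writing down an explicit $12$-dimensional representation, and your plan has no substitute for it.

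\textbf{The intermediate syzygies are not handled.} Your dimension-vector argument fails for $j=2$. The module $\Omega^{4m+2}(M_\lambda)$ is two-dimensional with the same dimension vector and top as $M_\lambda$, so comparing them rules out nothing. You can avoid the case analysis entirely. If $\Omega^p(M)\cong M$ for some $p>0$, then $\Omega^{2p}(M)\cong M$, or $\Omega^{4p}(M)\cong M$ in your normalization. This contradicts the drift formula because $q$ has infinite order, and it is exactly how the paper concludes.
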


The proof has two parts. We construct minimal projective resolutions showing
that the simple modules have period four. We then construct a two-dimensional
indecomposable nonprojective module whose even syzygies are pairwise
nonisomorphic. This module is not periodic, whereas every indecomposable
nonprojective module over a periodic algebra is periodic
\cite[Theorem~IV.11.19(ii)]{SkowronskiYamagata}. Hence $\Lambda(q)$ is not periodic.

\begin{remark}
The specialization $\Lambda(1)$ is the weighted surface algebra of
Erdmann--Skowro\'nski \cite[Example~5.11]{ErdmannSkowronskiSurface} with
weight $m=1$ and parameter $c=1$. It is symmetric and periodic of period
four \cite[Theorems~1.1 and~1.2]{ErdmannSkowronskiSurface}. The family
$\Lambda(q)$ is obtained by changing two coefficients in its defining
relations to $q$ and $q^{-1}$.

A related mechanism for nonperiodicity appears in Erdmann's construction
of modules over weakly symmetric special biserial algebras
\cite[Section~3]{ErdmannBounded}. She constructs families $M(\lambda)$, with $\lambda\ne0$,
satisfying $\Omega^2(M(\lambda))\cong M(v\lambda)$, where the nonzero
scalar $v$ is determined by the algebra relations and the chosen family.
When $v$ has infinite multiplicative order, successive even syzygies are
pairwise nonisomorphic. The two-dimensional modules constructed in
Section~\ref{sec:nonperiodic-module} satisfy the analogous formula with
$v=q$. Here this behavior occurs in an algebra whose simple modules have
period four.
\end{remark}

\begin{organization}
Section~\ref{sec:algebra} defines $\Lambda(q)$ and gives the path basis used
in the proof. Section~\ref{sec:simple-periodicity} constructs one period of
the minimal resolution of each simple module and deduces twisted periodicity
of $\Lambda(q)$. Section~\ref{sec:nonperiodic-module} constructs two-dimensional modules and
uses their syzygies to prove Theorem~\ref{thm:main}.
\end{organization}

\begin{conventions}
Throughout, $\kfield$ is an algebraically closed field. All modules are
finite-dimensional right modules. We omit the subscript from $\Omega$
when working with right $\Lambda(q)$-modules. Paths are multiplied in
traversal order: if $\alpha:i\to j$ and $\beta:j\to l$, then
$\alpha\beta$ first follows $\alpha$ and then $\beta$. Thus an arrow
$i\to j$ acts from $Me_i$ to $Me_j$ on a right module $M$. Indices are
elements of $\mathbb Z/3\mathbb Z$, represented by $1,2,3$.
\end{conventions}

\begin{aiuse}
The counterexample and its proof were found by GPT-6-Astra in research directed
by the author, and the first draft of the manuscript was written by
GPT-6-Astra. Claude Opus~5 reviewed the exposition of the manuscript. The
author set the structure and much of the wording of the present text. The
author is responsible for the result.
\end{aiuse}

\section{The algebra and its path basis}
\label{sec:algebra}

We define the algebra and determine a path basis. The dimensions and arrow
actions of its projective modules will be used to prove exactness of the
resolutions in Sections~\ref{sec:simple-periodicity} and
\ref{sec:nonperiodic-module}. Let $Q$ be the quiver
\[
\begin{tikzcd}[column sep=4em,row sep=6em,
  arrows={shorten <=1pt,shorten >=1pt}]
 & 2
   \arrow[dr,shift left=1ex,"\alpha_2"]
   \arrow[dr,shift right=1ex,"\beta_2"'] & \\
 1
   \arrow[ur,shift left=1ex,"\alpha_1"]
   \arrow[ur,shift right=1ex,"\beta_1"']
 && 3
   \arrow[ll,shift left=1ex,"\alpha_3"]
   \arrow[ll,shift right=1ex,"\beta_3"']
\end{tikzcd}
\]
Thus $\alpha_i,\beta_i:i\to i+1$. The construction and the resolutions
are defined for every $q\in\kfield^\times$; infinite order is needed
only to prove nonperiodicity. To write the relations cyclically while
placing the parameter at vertex $1$, set
\[
 \varepsilon_1=q,\qquad \varepsilon_2=\varepsilon_3=1.
\]
Define $\Lambda(q)=\kfield Q/I(q)$ by the following relations, for
$i\in\mathbb Z/3\mathbb Z$:
\begin{align}
 \alpha_i\alpha_{i+1}
 &=
 \varepsilon_i^{-1}
 \beta_i\alpha_{i+1}\beta_{i+2}\alpha_i\beta_{i+1},
 \label{eq:alpha-relation}\\
 \beta_i\beta_{i+1}
 &=
 \varepsilon_i
 \alpha_i\beta_{i+1}\alpha_{i+2}\beta_i\alpha_{i+1},
 \label{eq:beta-relation}\\
 \alpha_i\alpha_{i+1}\beta_{i+2}
 &=
 \beta_i\beta_{i+1}\alpha_{i+2}=0.
 \label{eq:zero-relations}
\end{align}

A path is called alternating if its arrows alternate between $\alpha$
and $\beta$. We describe a basis using these paths. For $1\leq l\leq6$, let
$p_{i,l}$ be the prefix of length $l$ of
\[
 \alpha_i\beta_{i+1}\alpha_{i+2}
 \beta_i\alpha_{i+1}\beta_{i+2},
\]
and let $p'_{i,l}$ be the prefix of length $l$ of
\[
 \beta_i\alpha_{i+1}\beta_{i+2}
 \alpha_i\beta_{i+1}\alpha_{i+2}.
\]

Both length-six paths are cycles at $i$. Put $\omega_i=p_{i,6}$.

\begin{proposition}
\label{prop:path-basis}
The set
\begin{equation}
 \{e_i,p_{i,1},\ldots,p_{i,5},
 p'_{i,1},\ldots,p'_{i,5},\omega_i\}
 \label{eq:projective-basis}
\end{equation}
is a basis of the right module
$P_i=e_i\Lambda(q)$. In particular, $\dim_\kfield\Lambda(q)=36$.
\end{proposition}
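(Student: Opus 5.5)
Two things have to be shown: the set \eqref{eq:projective-basis} spans $P_i$, and it is linearly independent. Spanning is a direct reduction argument. Independence needs a module of dimension $12$ on which the relations hold; that is the main obstacle.

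\textbf{Spanning.} $P_i$ is spanned by the classes of paths starting at $i$, so it suffices to reduce every path. First, the relations make every path of length $\ge 7$ vanish. Any non-alternating path contains a factor $\alpha_j\alpha_{j+1}$ or $\beta_j\beta_{j+1}$. By \eqref{eq:alpha-relation} and \eqref{eq:beta-relation}, each such factor equals a scalar times an alternating path of length five. A non-alternating path therefore reduces to a sum of longer paths, and by induction on $7-\text{length}$ every path is a combination of alternating paths. There are only two alternating paths of each length from $i$, namely $p_{i,l}$ and $p'_{i,l}$.

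At length six the two cycles are identified. Left-multiply \eqref{eq:beta-relation} (for index $i+1$) by $\alpha_i$. Then $\alpha_i\beta_{i+1}\beta_{i+2}$ equals $\varepsilon_{i+1}\,\alpha_i\alpha_{i+1}\beta_{i+2}\cdots$, which is zero by \eqref{eq:zero-relations}; the relation yields a length-six alternating path times a scalar. In the same way, left-multiplying \eqref{eq:alpha-relation} by $\beta_i$ gives $p'_{i,6}=c\,\omega_i$ for an explicit product $c$ of $\varepsilon$'s.

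For length seven, $\omega_i\alpha_i$ contains $\alpha_{i+2}\alpha_i$. Using \eqref{eq:alpha-relation} it becomes a path of length $\ge 10$ containing a zero factor. I expect the cleanest route is to show $\alpha_j\alpha_{j+1}\beta_{j+2}=0$ propagates, so that $\mathrm{rad}^7=0$. This gives at most $12$ spanning elements per vertex.

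\textbf{Independence.} I would build explicitly a $12$-dimensional right $\Lambda(q)$-module $V_i$ with basis vectors $v_b$ indexed by \eqref{eq:projective-basis}, graded by vertex. The arrows act as in the path algebra on alternating prefixes: $v_{p}\cdot\gamma = v_{p\gamma}$ when $p\gamma$ is alternating of length $\le 5$. The two length-five paths are sent to $\omega_i$, with the scalars forced above, and every other product is set to $0$.

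Next, check that every relation acts as zero on $V_i$. There are $18$ relation families, but each acts on only a few basis vectors, because a product of two or three arrows is nonzero only on vectors of length $\le 4$. The critical checks are on $v_{e_i}$ and the vectors of length $\le 4$ preceding $\alpha\alpha$ or $\beta\beta$. There the left side vanishes and the right side is a length-five continuation that overshoots $\omega$, hence vanishes. The exception is when the total length is exactly $6$, and that case has to match the chosen scalar. The consistency of this scalar coming from the two different relations is the delicate point. It works because $\varepsilon_i^{-1}$ and $\varepsilon_i$ appear reciprocally, so the two identifications of $p'_{i,6}$ with $\omega_i$ agree.

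Once $V_i$ is a $\Lambda(q)$-module, the map $P_i\to V_i$ given by $x\mapsto v_{e_i}x$ sends each listed element to a distinct basis vector up to a nonzero scalar. Hence the set is independent. Summing over $i=1,2,3$ gives $\dim\Lambda(q)=3\cdot12=36$.
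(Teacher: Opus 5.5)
Your independence step fails as stated, because the module $V_i$ you describe does not satisfy the relations. You set every non-alternating product to zero, so $v_{p_{i,1}}\alpha_{i+1}=0$. Then \eqref{eq:alpha-relation} evaluated on $v_{e_i}$ has left side $v_{e_i}\alpha_i\alpha_{i+1}=0$, but its right side is $\varepsilon_i^{-1}v_{e_i}\beta_i\alpha_{i+1}\beta_{i+2}\alpha_i\beta_{i+1}=\varepsilon_i^{-1}v_{p'_{i,5}}\neq0$. Your claim that the right side ``overshoots $\omega$'' is false on $v_{e_i}$, where the continuation has length five. Total length six is no better. On $v_{p_{i,1}}$, the relation at index $i+1$ has right side $\varepsilon_{i+1}^{-1}v_{\omega_i}$ and left side $0$, so there is no scalar to adjust. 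The same failures occur for \eqref{eq:beta-relation} on $v_{e_i}$ and $v_{p'_{i,1}}$.

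In any module where the alternating paths act as you prescribe, the relations force $v_{p_{i,1}}\alpha_{i+1}=\varepsilon_i^{-1}v_{p'_{i,5}}$ and $v_{p'_{i,1}}\beta_{i+1}=\varepsilon_i v_{p_{i,5}}$. These are exactly the extra actions \eqref{eq:additional-actions} used in the paper. Only with them does the length-six check become a scalar comparison. Write $v_{p'_{i,5}}\alpha_{i+2}=c\,v_{\omega_i}$. The $\alpha$-check then reads $\varepsilon_i^{-1}c=\varepsilon_{i+1}^{-1}$ and the $\beta$-check reads $\varepsilon_{i+1}c=\varepsilon_i$, and both are solved by $c=\varepsilon_i/\varepsilon_{i+1}$.

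The spanning half has a gap at its crux. Your induction on $7-\text{length}$ presupposes that all paths of length at least seven vanish, and you never prove this. Substituting relations only lengthens paths, so the reduction cannot terminate by itself. Your sketch for length seven also rests on a false observation. The path $\omega_i\alpha_i=\alpha_i\beta_{i+1}\alpha_{i+2}\beta_i\alpha_{i+1}\beta_{i+2}\alpha_i$ ends in $\beta_{i+2}\alpha_i$ and is alternating. It contains no factor $\alpha_{i+2}\alpha_i$, so no relation applies to it directly, and it is exactly the hard case.

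Your derivation of $p'_{i,6}\in\kfield\omega_i$ is also incorrect. Multiplying the $\beta$-relation at $i+1$ on the left by $\alpha_i$ only gives the zero relation $\alpha_i\beta_{i+1}\beta_{i+2}=0$. Likewise, $\beta_i$ times the $\alpha$-relation gives $\beta_i\alpha_{i+1}\alpha_{i+2}=0$. No length-six path survives either computation.

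The missing idea is to reduce a monochromatic word at two overlapping pairs:
\begin{itemize}
\item Reducing $\alpha_i\alpha_{i+1}\alpha_{i+2}$ at its first pair and at its last pair gives $\varepsilon_i^{-1}p'_{i,6}=\varepsilon_{i+1}^{-1}\omega_i$.
\item Reducing the first pair of $\alpha_i\alpha_{i+1}\alpha_{i+2}\alpha_i$ gives a path ending in $\beta_{i+1}\alpha_{i+2}\alpha_i=0$. Reducing its middle pair gives $\varepsilon_{i+1}^{-1}\omega_i\alpha_i$. Hence $\omega_i\alpha_i=0$.
\item The word $\beta_i\beta_{i+1}\beta_{i+2}\beta_i$ kills $p'_{i,6}\beta_i$ in the same way.
\end{itemize}
Alternatively, once you know $p'_{i,6}\in\kfield^\times\omega_i$, the path $\omega_i\alpha_i$ is a multiple of the nonalternating path $p'_{i,6}\alpha_i$, and there your substitution argument does work. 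Combine this with the zero relations above and the vanishing of monochromatic words of length four. That shows every path of length at least seven vanishes, and only then does your reduction to the twelve listed paths go through.
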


\begin{proof}
We divide the proof into two steps: first we show that the proposed paths
span $P_i$, then we construct a module that proves their independence.

\medskip
\noindent\textbf{Step 1: Spanning.}
Besides the zero relations \eqref{eq:zero-relations}, we have
\begin{equation}
 \alpha_i\beta_{i+1}\beta_{i+2}
 =\varepsilon_{i+1}\alpha_i\alpha_{i+1}\beta_{i+2}
   \alpha_i\beta_{i+1}\alpha_{i+2}=0,
 \label{eq:additional-zero}
\end{equation}
where the last equality uses \eqref{eq:zero-relations}. Likewise,
\[
 \beta_i\alpha_{i+1}\alpha_{i+2}
 =\varepsilon_{i+1}^{-1}\beta_i\beta_{i+1}\alpha_{i+2}
   \beta_i\alpha_{i+1}\beta_{i+2}=0.
\]
Together with \eqref{eq:zero-relations}, these equalities show that every
nonalternating path of length three containing both an $\alpha$-arrow and
a $\beta$-arrow is zero. Reducing the first two and the last two arrows of
$\alpha_i\alpha_{i+1}\alpha_{i+2}$ gives
\[
 \varepsilon_i^{-1}p'_{i,6}
 =\alpha_i\alpha_{i+1}\alpha_{i+2}
 =\varepsilon_{i+1}^{-1}p_{i,6},
\]
so the two length-six paths satisfy
\begin{equation}
 p'_{i,6}=\frac{\varepsilon_i}{\varepsilon_{i+1}}\omega_i.
 \label{eq:socle-paths}
\end{equation}
Also,
\[
 \beta_i\beta_{i+1}\beta_{i+2}
 =\varepsilon_i\alpha_i\beta_{i+1}\alpha_{i+2}
   \beta_i\alpha_{i+1}\beta_{i+2}=\varepsilon_i\omega_i.
\]
Thus
\begin{equation}
 \alpha_i\alpha_{i+1}\alpha_{i+2}
 =\varepsilon_{i+1}^{-1}\omega_i,
 \qquad
 \beta_i\beta_{i+1}\beta_{i+2}=\varepsilon_i\omega_i.
 \label{eq:triple-paths}
\end{equation}

A word of four $\alpha$-arrows is zero: reducing its first pair produces
an ending $\beta_{i+1}\alpha_{i+2}\alpha_i=0$. A word of four
$\beta$-arrows is also zero: reducing its first pair produces an ending
$\alpha_{i+1}\beta_{i+2}\beta_i=0$. Reducing the middle pair of
$\alpha_i\alpha_{i+1}\alpha_{i+2}\alpha_i$ gives
\[
 0=\alpha_i\alpha_{i+1}\alpha_{i+2}\alpha_i
   =\varepsilon_{i+1}^{-1}p_{i,6}\alpha_i.
\]
Reducing the middle pair of the word of four $\beta$-arrows gives
\[
 0=\beta_i\beta_{i+1}\beta_{i+2}\beta_i
   =\varepsilon_{i+1}p'_{i,6}\beta_i.
\]
Hence every alternating path of length at least seven is zero. A nonalternating
path of length at least four either contains
a nonalternating subpath of length three with both arrow types, or consists
entirely of one arrow type and contains a zero path of length four. Thus all such paths vanish. The only remaining
nonalternating paths have lengths two or three, and the relations replace
them by alternating paths of length five or six. This proves spanning.

\medskip
\noindent\textbf{Step 2: Linear independence.}
Let $W_i$ be a twelve-dimensional vector space with basis vectors labelled
by the paths in \eqref{eq:projective-basis}. In defining $W_i$, these labels
denote independent formal vectors, not elements of $P_i$. Place each vector
at the terminal vertex of its path, and let the vertex idempotents act
accordingly. Define the arrow actions by the following diagram and the two
formulas below it. The upper and lower rows correspond to paths starting
with $\alpha_i$ and $\beta_i$, respectively. An edge labelled $\gamma$
means right multiplication by $\gamma$; a coefficient $c$ on that edge
means that the image is $c$ times its target.
\[
\begin{tikzcd}[column sep=2em,row sep=1.4em]
 & p_{i,1} \arrow[r,"\beta_{i+1}"]
 & p_{i,2} \arrow[r,"\alpha_{i+2}"]
 & p_{i,3} \arrow[r,"\beta_i"]
 & p_{i,4} \arrow[r,"\alpha_{i+1}"]
 & p_{i,5} \arrow[dr,"\beta_{i+2}"] & \\
 e_i \arrow[ur,"\alpha_i"] \arrow[dr,"\beta_i"']
 & & & & & & \omega_i \\
 & p'_{i,1} \arrow[r,"\alpha_{i+1}"']
 & p'_{i,2} \arrow[r,"\beta_{i+2}"']
 & p'_{i,3} \arrow[r,"\alpha_i"']
 & p'_{i,4} \arrow[r,"\beta_{i+1}"']
 & p'_{i,5} \arrow[ur,"\alpha_{i+2}"',
   "\frac{\varepsilon_i}{\varepsilon_{i+1}}"] &
\end{tikzcd}
\]
The two additional nonzero arrow actions are
\begin{equation}
 p_{i,1}\alpha_{i+1}=\varepsilon_i^{-1}p'_{i,5},
 \qquad p'_{i,1}\beta_{i+1}=\varepsilon_i p_{i,5}.
 \label{eq:additional-actions}
\end{equation}

Every other arrow action is zero. We verify that these actions satisfy
the defining relations, so that $W_i$ is a $\Lambda(q)$-module.

On $e_i$, relations
\eqref{eq:alpha-relation} and \eqref{eq:beta-relation} are precisely
\eqref{eq:additional-actions}. On the basis vectors of length one, the
only instances of these two relations with nonzero values on either side give
\begin{align*}
 (p_{i,1}\alpha_{i+1})\alpha_{i+2}
 &=\varepsilon_{i+1}^{-1}\omega_i
 =p_{i,1}(\varepsilon_{i+1}^{-1}p'_{i+1,5}),\\
 (p'_{i,1}\beta_{i+1})\beta_{i+2}
 &=\varepsilon_i\omega_i
 =p'_{i,1}(\varepsilon_{i+1}p_{i+1,5}).
\end{align*}
The remaining instances on $p_{i,1}$ and $p'_{i,1}$ have both sides
zero: we have
$p_{i,1}\beta_{i+1}\beta_{i+2}=0$ and
$p'_{i,1}\alpha_{i+1}\alpha_{i+2}=0$, and their paths of length five
also act by zero. On a basis vector of length at least two, the only
nonzero arrow actions follow the alternating paths in the diagram.
Thus two consecutive $\alpha$-arrows or two consecutive $\beta$-arrows
act by zero, and every path of length five acts by zero. This verifies
\eqref{eq:alpha-relation} and \eqref{eq:beta-relation}.

For the zero relations \eqref{eq:zero-relations}, their actions on $e_i$
are zero because $p'_{i,5}\beta_{i+2}=0$ and
$p_{i,5}\alpha_{i+2}=0$. On a basis vector of length one, the first two
arrows either act by zero or give a multiple of $\omega_i$, which the
third arrow kills. On a basis vector of length at least two, the first
two arrows act by zero. Hence all the defining relations hold on $W_i$.

Consequently $W_i$ is a $\Lambda(q)$-module. It is generated by $e_i$,
so evaluation $P_i\to W_i$, $x\mapsto e_i\cdot x$, is surjective.
Spanning and $\dim_\kfield W_i=12$ now give
$12\leq\dim_\kfield P_i\leq12$, so the spanning set
\eqref{eq:projective-basis} is a basis. Since
$\Lambda(q)=P_1\oplus P_2\oplus P_3$, we obtain
$\dim_\kfield\Lambda(q)=3\cdot12=36$.
\end{proof}

The ideal of relations is contained in the square of the arrow ideal
and contains every path of length seven, so it is admissible. Since
the quiver is connected, $\Lambda(q)$ is indecomposable as an algebra,
and admissibility also shows that
$P_i$ is the indecomposable projective module at vertex $i$, with
one-dimensional simple top $S_i$.

\section{Periodicity of the simple modules}
\label{sec:simple-periodicity}

We construct one period of the minimal projective resolution of $S_i$.
Fix $i$ and put
\[
 r_i=\beta_{i+1}\alpha_{i+2}\beta_i\alpha_{i+1},
 \qquad
 s_i=\alpha_{i+1}\beta_{i+2}\alpha_i\beta_{i+1},
\]
so that relations \eqref{eq:alpha-relation} and
\eqref{eq:beta-relation} become
$\beta_i\beta_{i+1}=\varepsilon_i\alpha_i r_i$ and
$\alpha_i\alpha_{i+1}=\varepsilon_i^{-1}\beta_i s_i$.
We define homomorphisms of right modules
\[
 d_1:P_{i+1}^{\oplus2}\longrightarrow P_i,\qquad
 d_2:P_{i+2}^{\oplus2}\longrightarrow P_{i+1}^{\oplus2},\qquad
 d_3:P_i\longrightarrow P_{i+2}^{\oplus2}
\]
by left multiplication on column vectors by the matrices
\[
 d_1=
 \begin{bmatrix}\alpha_i&\beta_i\end{bmatrix},
 \qquad
 d_2=
 \begin{bmatrix}
  \alpha_{i+1}&-\varepsilon_i r_i\\
  -\varepsilon_i^{-1}s_i&\beta_{i+1}
 \end{bmatrix},
 \qquad
 d_3=
 \begin{bmatrix}
  \alpha_{i+2}\\
  (\varepsilon_i\varepsilon_{i+1})^{-1}\beta_{i+2}
 \end{bmatrix}.
\]
Write $d_j=d_j^{(i)}$ while $i$ is fixed. Let $\pi_i:P_i\to S_i$ be the quotient map, and let
$\iota_i:S_i\to P_i$ send $\pi_i(e_i)$ to $\omega_i$. The latter
is a module map because $\omega_i$ is a cycle killed by every arrow.

\begin{proposition}
\label{prop:simple-resolution}
For each $i$, the sequence
\[
\begin{tikzcd}[column sep=2.8em]
0 \arrow[r] &
S_i \arrow[r,"\iota_i"] &
P_i \arrow[r,"d_3^{(i)}"] &
P_{i+2}^{\oplus2} \arrow[r,"d_2^{(i)}"] &
P_{i+1}^{\oplus2} \arrow[r,"d_1^{(i)}"] &
P_i \arrow[r,"\pi_i"] &
S_i \arrow[r] &
0
\end{tikzcd}
\]
is exact and minimal. Consequently, $S_i$ has period four.
\end{proposition}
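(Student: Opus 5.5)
We will verify exactness by combining complex identities with a dimension count, using the path basis of Proposition~\ref{prop:path-basis}. Every indecomposable projective is $12$-dimensional. Therefore the alternating sum of dimensions along the sequence is
\[
1-12+24-24+12-1=0,
\]
which is consistent with exactness. It then suffices to prove three things: the sequence is a complex; $\pi_i$ is surjective and $\iota_i$ is injective; and three of the intermediate maps have the ranks forced by exactness, namely $\operatorname{rank} d_1=11$, $\operatorname{rank} d_2=13$ and $\operatorname{rank} d_3=11$. Minimality is immediate, since every entry of $d_1,d_2,d_3$ lies in the arrow ideal and $\iota_i,\pi_i$ are the socle inclusion and the top projection. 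Once exactness holds, $\Omega^4(S_i)=\ker d_3\cong\im\iota_i\cong S_i$. Because the syzygies $\Omega^1,\Omega^2,\Omega^3$ have dimensions $11,13,11$, none of them is simple, so the period is exactly four.

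\textbf{Complex identities.} We first check $\pi_i d_1=0$, which is clear. Next, $d_1d_2=0$ amounts to
\[
\alpha_i\alpha_{i+1}-\varepsilon_i^{-1}\beta_i s_i=0
\quad\text{and}\quad
-\varepsilon_i\alpha_i r_i+\beta_i\beta_{i+1}=0,
\]
which are exactly the rewritten relations \eqref{eq:alpha-relation} and \eqref{eq:beta-relation}. For $d_2d_3=0$ we compute the two entries
\[
\alpha_{i+1}\alpha_{i+2}-(\varepsilon_{i+1})^{-1} r_i\beta_{i+2}
\quad\text{and}\quad
-\varepsilon_i^{-1}s_i\alpha_{i+2}+(\varepsilon_i\varepsilon_{i+1})^{-1}\beta_{i+1}\beta_{i+2}.
\]
Here $r_i\beta_{i+2}$ is the alternating length-five path $p'_{i+1,5}$ (starting at $i+1$). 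Relation \eqref{eq:alpha-relation} at index $i+1$ gives $\alpha_{i+1}\alpha_{i+2}=\varepsilon_{i+1}^{-1}p'_{i+1,5}$, so the first entry vanishes. Similarly $s_i\alpha_{i+2}=p_{i+1,5}$, and $\beta_{i+1}\beta_{i+2}=\varepsilon_{i+1}p_{i+1,5}$, so the second entry is $(-\varepsilon_i^{-1}+\varepsilon_i^{-1})p_{i+1,5}=0$. Finally, $d_3\iota_i=0$ because $\alpha_{i+2}\omega_i$ and $\beta_{i+2}\omega_i$ are paths of length seven, hence zero.

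\textbf{Rank computations.} This is the main obstacle. We will compute images on the explicit basis of Proposition~\ref{prop:path-basis}, using \eqref{eq:socle-paths} and \eqref{eq:triple-paths} to normalize products.

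For $d_1$, the image is $\alpha_iP_{i+1}+\beta_iP_{i+1}=\rad P_i$, which has dimension $11$. This is immediate.

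For $d_3$, the kernel consists of $x\in P_i$ with $\alpha_{i+2}x=0$ and $\beta_{i+2}x=0$. Left multiplication by $\alpha_{i+2}$ sends basis paths from $i$ to paths starting $\alpha_{i+2}\alpha_i\cdots$ or $\alpha_{i+2}\beta_i\cdots$. Reading these off the basis of $P_{i+2}$ (reducing non-alternating ones via the relations), we will show that the joint kernel is just $k\,\omega_i$. Then $\operatorname{rank} d_3=11$. Concretely, we will show that $e_i$ and the ten paths $p_{i,l},p'_{i,l}$ with $l\le5$ have images that are linearly independent in $P_{i+2}^{\oplus 2}$. Prepending $\alpha_{i+2}$ to $p'_{i,l}$ (respectively $\beta_{i+2}$ to $p_{i,l}$) gives distinct alternating basis paths of lengths $2$ to $6$. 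The remaining products are either zero or multiples of $\omega$-type socle elements, which the leading terms control.

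For $d_2$, it is easiest to prove $\ker d_1\subseteq\im d_2$ by a filtration or length argument. Since $\dim\ker d_1=13$, it is enough to exhibit $13$ linearly independent elements of $\im d_2$. We take the images of $e_{i+2}$ in each coordinate together with their right multiples by alternating paths. The leading terms $\alpha_{i+1}w$ in the first coordinate and $\beta_{i+1}w$ in the second are distinct basis paths. We will organize the count by path length, check triangularity with respect to length, and handle the few collisions in the socle degree by a direct computation using \eqref{eq:socle-paths}.
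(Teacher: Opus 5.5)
Your framework matches the paper's: check the complex identities, obtain $\operatorname{rank} d_1=11$, $\operatorname{rank} d_2=13$ and $\operatorname{rank} d_3=11$ from the path basis, and conclude by counting dimensions. Your complex identities, the $d_1$ step, minimality and the period argument are all correct. The $d_3$ step also works, with one correction. The non-leading terms are not socle elements. They are $\alpha_{i+2}\alpha_i=\varepsilon_{i+2}^{-1}p'_{i+2,5}$ and $\beta_{i+2}\beta_i=\varepsilon_{i+2}p_{i+2,5}$, lying in the opposite coordinate of $d_3(p_{i,1})$ and $d_3(p'_{i,1})$. This does no harm, since they are longer than the leading terms $p'_{i+2,2}$ and $p_{i+2,2}$.

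The gap is the rank of $d_2$. You only promise it, and the mechanism you describe does not deliver it. Let $v=\binom{\alpha_{i+1}}{-\varepsilon_i^{-1}s_i}$ and $w=\binom{-\varepsilon_i r_i}{\beta_{i+1}}$ be the columns of $d_2$; here $w$ is a column, not your path. The right multiples whose leading term $\alpha_{i+1}x$, resp.\ $\beta_{i+1}x$, is an alternating path are $v$, $w$, $vp'_{i+2,l}=\binom{p_{i+1,l+1}}{0}$ and $wp_{i+2,l}=\binom{0}{p'_{i+1,l+1}}$ for $1\le l\le 5$. That is only twelve vectors. There is no socle collision among them, because $\binom{\omega_{i+1}}{0}$ and $\binom{0}{p'_{i+1,6}}$ lie in different summands. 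The other socle multiples, $vp_{i+2,2}$ and $wp'_{i+2,2}$, are proportional to these two, so resolving collisions in the socle degree adds nothing.

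The thirteenth dimension sits in length five: $v\alpha_{i+2}=\binom{\varepsilon_{i+1}^{-1}p'_{i+1,5}}{-\varepsilon_i^{-1}p_{i+1,5}}$. Its first coordinate comes from reducing the nonalternating path $\alpha_{i+1}\alpha_{i+2}$ by the $\alpha$-relation. Moreover $w\beta_{i+2}=-\varepsilon_i\varepsilon_{i+1}\,v\alpha_{i+2}$, which is just $d_2d_3(e_i)=0$. So your rule of taking $\alpha_{i+1}x$ in the first coordinate and $\beta_{i+1}x$ in the second as leading terms would declare fourteen vectors independent, exceeding $\dim\ker d_1=13$. The rule is not a valid triangularity criterion here, because both terms of $v\alpha_{i+2}$ have length five. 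The real collision is in length five, not in the socle.

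The fix is the paper's argument. Take the thirteen vectors $v$, $w$, $v\alpha_{i+2}$, $vp'_{i+2,l}$ and $wp_{i+2,l}$. Kill the coefficients by reading off $\alpha_{i+1}$ in the first coordinate, then $\beta_{i+1}$ in the second, then $p'_{i+1,5}$ in the first; the remaining ten are distinct coordinate basis vectors. Without this step, neither $\im d_2=\ker d_1$ nor $\dim\ker d_2=11$ is established, and so exactness at $P_{i+1}^{\oplus2}$ and at $P_{i+2}^{\oplus2}$ both remain open.
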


\begin{proof}
We show that consecutive maps compose to zero and that $d_1,d_2,d_3$
have ranks $11,13,11$, respectively. These ranks will give exactness
at the two middle projectives by comparison with the kernel dimensions.
Relations \eqref{eq:alpha-relation} and \eqref{eq:beta-relation}
at $i$ give $d_1d_2=0$.
For the other product, the identities
\[
 \alpha_{i+1}\alpha_{i+2}
 =\varepsilon_{i+1}^{-1}r_i\beta_{i+2},
 \qquad
 \beta_{i+1}\beta_{i+2}
 =\varepsilon_{i+1}s_i\alpha_{i+2}
\]
give $d_2d_3=0$.

The paths of positive length in \eqref{eq:projective-basis} span
$\rad P_i$ and all start with $\alpha_i$ or $\beta_i$. Thus
$\im d_1=\rad P_i$ has dimension $11$, and hence
$\dim_\kfield\ker d_1=24-11=13$.

We next prove $\im d_2=\ker d_1$ by exhibiting thirteen independent
vectors in $\im d_2$. Let $v,w$ be the columns of $d_2$.
For $1\leq l\leq5$, multiplication gives
\[
 v p'_{i+2,l}=\binom{p_{i+1,l+1}}{0},
 \qquad
 w p_{i+2,l}=\binom{0}{p'_{i+1,l+1}}.
\]
These ten vectors are independent. We adjoin $v,w$ and the vector
\[
 v\alpha_{i+2}
 =\binom{\varepsilon_{i+1}^{-1}p'_{i+1,5}}
            {-\varepsilon_i^{-1}p_{i+1,5}}.
\]
In a linear relation among these thirteen vectors, the coefficient of
$\alpha_{i+1}$ in the first coordinate forces the coefficient of $v$ to
be zero. The coefficient of $\beta_{i+1}$ in the second coordinate does
the same for $w$. The coefficient of $p'_{i+1,5}$ in the first coordinate
then forces the coefficient of $v\alpha_{i+2}$ to be zero. Independence
of the original ten vectors proves $\dim_\kfield\im d_2\geq13$.
Since $\im d_2\subseteq\ker d_1$ and $\dim_\kfield\ker d_1=13$,
we obtain $\im d_2=\ker d_1$.

It follows that $\dim_\kfield\ker d_2=24-13=11$. To prove
$\im d_3=\ker d_2$ and $\ker d_3=\im\iota_i$, we compute $d_3$
on the path basis of $P_i$. Put
$\kappa_i=(\varepsilon_i\varepsilon_{i+1})^{-1}$. The values of $d_3$
on the path basis are
\begin{align*}
 d_3(e_i)&=\binom{\alpha_{i+2}}{\kappa_i\beta_{i+2}},&
 d_3(\omega_i)&=0,\\
 d_3(p_{i,1})&=\binom{\varepsilon_{i+2}^{-1}p'_{i+2,5}}
                         {\kappa_i p'_{i+2,2}},&
 d_3(p'_{i,1})&=\binom{p_{i+2,2}}
                          {\kappa_i\varepsilon_{i+2}p_{i+2,5}},\\
 d_3(p_{i,l})&=\binom{0}{\kappa_i p'_{i+2,l+1}},&
 d_3(p'_{i,l})&=\binom{p_{i+2,l+1}}{0}\quad(2\leq l\leq5).
\end{align*}
In a linear relation among the eleven nonzero images, comparing the
coefficient of $\alpha_{i+2}$ in the first coordinate forces the coefficient
of $d_3(e_i)$ to be zero. The coefficients of $p'_{i+2,2}$ in the second
coordinate and $p_{i+2,2}$ in the first then force the coefficients of
$d_3(p_{i,1})$ and $d_3(p'_{i,1})$ to be zero. The remaining eight vectors are nonzero multiples of
distinct coordinate basis vectors, so they are independent. The image
of $\omega_i$ vanishes because paths of length seven vanish. Therefore
\[
 \ker d_3=\kfield\omega_i=\im\iota_i,
 \qquad \dim_\kfield\im d_3=11=\dim_\kfield\ker d_2.
\]
Together with $\im d_1=\rad P_i=\ker\pi_i$, this proves exactness.

All entries of the differentials belong to the radical, so the sequence is minimal. It gives $\Omega^4(S_i)\cong S_i$;
the first three syzygies have dimensions $11,13,11$, whereas
$\dim_\kfield S_i=1$. Thus the period is four.
\end{proof}

To deduce twisted periodicity from the simple resolutions, we use the
following result of Green--Snashall--Solberg.

\begin{theorem}[{\cite[Theorem~1.4 and its proof]{GreenSnashallSolberg}}]
\label{thm:gss}
Let $A$ be a finite-dimensional indecomposable algebra over an algebraically
closed field. Suppose that every simple right $A$-module is periodic, and
let $n$ be the least positive integer such that $\Omega_A^n(S)\cong S$
for every simple right $A$-module $S$. Then
\[
 \Omega_{A^{\ee}}^n(A)\cong {}_1A_\varphi
\]
for some $\varphi\in\Aut_\kfield(A)$.
\end{theorem}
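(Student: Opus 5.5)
The plan is to follow the strategy of Green--Snashall--Solberg: build a minimal projective bimodule resolution of $A$ and compare it with the minimal resolutions of the simple modules. Write $\mathfrak r=\rad A$ and $A_0=A/\mathfrak r\cong\kfield^{m}$; the latter holds because $\kfield$ is algebraically closed and we may pass to a basic algebra, since periodicity and twisted periodicity are Morita invariant. Let $\mathbb P\to A$ be the minimal projective resolution over $A^{\ee}$. Its $n$-th term has the form
\[
 P^n=\bigoplus_{i,j}(Ae_i\otimes_\kfield e_jA)^{c^n_{ij}}.
\]
The multiplicity $c^n_{ij}$ equals $\dim\operatorname{Ext}^n_A(S_i,S_j)$, or the transpose of this, depending on conventions.

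The key observation is that $\mathbb P\otimes_A S$ is a minimal projective resolution of a simple $S$. Indeed, $\mathbb P$ splits as a complex of left $A$-modules, so tensoring preserves exactness, and minimality is preserved because $\otimes_A A_0$ kills the differentials modulo the radical. Consequently
\[
 \Omega^n_{A^{\ee}}(A)\otimes_A A_0\cong\bigoplus_S\Omega^n_A(S).
\]

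By hypothesis, $\Omega^n_{A^{\ee}}(A)\otimes_A A_0\cong A_0$ as right modules, each simple appearing once. The plan then has three steps.
\begin{enumerate}
\item Show that $\Omega^n_{A^{\ee}}(A)\otimes_A S$ is simple for every simple left module $S$. This uses the same argument for left simples, which follows from the Ext-dimension symmetry of the bimodule resolution.
\item Set $M=\Omega^n_{A^{\ee}}(A)$. It is projective as a left and as a right $A$-module, being a syzygy of $A$, which is one-sided projective. Its top as a left module is a sum of simples each with multiplicity one, so $M\cong A$ as a left module. Here one compares dimensions via $M\otimes_A A_0$ and uses the fact that a projective module is determined by its top.
\item Fix a left isomorphism $A\to M$ sending $1\mapsto m$. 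Since right multiplication by each $a$ is a left endomorphism of $M\cong{}_AA$, it is right multiplication by a unique element $\varphi(a)$, that is, $ma=\varphi(a)m$. Then $\varphi$ is an algebra endomorphism, and it is bijective because $M$ is also right free of rank one. This gives $M\cong{}_{\varphi^{-1}}A_1\cong{}_1A_{\varphi}$ after twisting.
\end{enumerate}

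I expect step 2 to be the main obstacle: showing that $M$ is left projective with the correct top. The passage from right-module information about simples to left-module structure requires the duality. One has $\Hom_A(M,A)$ relating $\Omega^n$ of right simples to left ones via $\operatorname{Ext}$ symmetry, $\operatorname{Ext}^n_{A^{\ee}}(A,S_j^{*}\otimes S_i)$. To get there, I would first show that $A$ is selfinjective, using that periodic simples force all projectives to be injective via Auslander--Reiten arguments. Then $\Omega^{-n}$ is available, and the left-module statement follows from $D(S)$ being a right simple.
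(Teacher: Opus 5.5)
The paper does not prove this statement itself; it cites Green--Snashall--Solberg. Judged on its own merits, your plan has a genuine gap exactly where you expect trouble. Step~1, that $\Omega^n$ of every \emph{left} simple module is simple, is never established. Step~3 depends on it, through ``$M$ is right free of rank one'', to make $\varphi$ bijective.

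The Ext-symmetry $\dim\operatorname{Ext}^k_A(S_i,S_j)=\dim\operatorname{Ext}^k_{A^{\op}}(DS_j,DS_i)$ only transfers the multiplicities $c^k_{ij}$, that is, the projective terms of the minimal resolutions of the left simples. It does not transfer the syzygies themselves. A dimension count along these periodic terms, combined with boundedness of the syzygies, does give $\Omega^n_{A^{\op}}(DS)\cong DS$ when $n$ is even. When $n$ is odd it gives only $\Omega^{2n}_{A^{\op}}(DS)\cong DS$.

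Your other route, selfinjectivity ``via Auslander--Reiten arguments'', is asserted rather than proved. Granting it, duality would indeed give $\Omega^n_{A^{\op}}(DS)\cong D\,\Omega^{-n}_A(S)\cong DS$.

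Part of the trouble is that your sides are reversed. For a right module $X$ the resolution is $X\otimes_A\mathbb P$, because the augmented complex splits as left modules. So the hypothesis controls $A_0\otimes_AM=M/\mathfrak rM$, which is the top of $M$ as a \emph{left} module. By contrast, $M\otimes_AA_0=M/M\mathfrak r$ is governed by left simples. Since $M$ is left projective and $e_i(M/\mathfrak rM)\cong S_i\otimes_AM\cong\Omega^n_A(S_i)\cong S_i$, you get ${}_AM\cong{}_AA$ directly, with no duality.

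The other side can then be handled without selfinjectivity:
\begin{enumerate}
\item Write $M\cong{}_1A_\psi$ as in your Step~3.
\item For every simple $S$ you have $S_\psi\cong S\otimes_AM\cong S$. So $S\psi(\mathfrak r)=S_\psi\mathfrak r=0$, which gives $\psi(\mathfrak r)\subseteq\mathfrak r$.
\item For basic $A$, $(S_i)_\psi\cong S_i$ forces $\psi(e_j)\equiv e_j$ modulo $\mathfrak r$, so $\psi$ induces the identity on $A/\mathfrak r$.
\item Hence the right projective module $A_\psi$ surjects onto $(A/\mathfrak r)_\psi\cong A/\mathfrak r$, so $A_A$ is a direct summand of $A_\psi$.
\item Since $\dim A_\psi=\dim A$, this forces $A_\psi\cong A_A$.
\item Faithfulness of $A_A$ makes $\psi$ injective, hence an automorphism.
\end{enumerate}
This argument replaces both Step~1 and the selfinjectivity detour.
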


Since $\Lambda(q)$ is indecomposable and the common period of its simple
modules is four, Theorem~\ref{thm:gss} gives
\begin{equation}
 \Omega^4_{\Lambda(q)^{\ee}}(\Lambda(q))
 \cong{}_1\Lambda(q)_\varphi
 \label{eq:twisted-period}
\end{equation}
for some $\varphi\in\Aut_\kfield(\Lambda(q))$. Thus $\Lambda(q)$ is
twisted periodic. To refute Conjecture~\ref{conj:periodicity}, it remains
to show that $\Lambda(q)$ is not periodic.

\section{Nonperiodic modules}
\label{sec:nonperiodic-module}

We construct indecomposable nonprojective modules whose even syzygies are
pairwise nonisomorphic when $q$ has infinite order. Every indecomposable
nonprojective module over a periodic algebra is periodic
\cite[Theorem~IV.11.19(ii)]{SkowronskiYamagata}, so any one of these modules
will show that $\Lambda(q)$ is not periodic. For $\lambda\in\kfield^\times$,
define $U_\lambda$ by
\[
\begin{tikzcd}[column sep=5em]
U_\lambda e_1=\kfield
 \arrow[r,shift left=1ex,"\lambda"]
 \arrow[r,shift right=1ex,"1"']
&
U_\lambda e_2=\kfield,
\end{tikzcd}
\qquad
U_\lambda e_3=0.
\]
The two maps are the actions of $\alpha_1$ and $\beta_1$, respectively, and
all other arrows act as zero. Every path of length two acts by zero,
so all the defining relations hold. Thus $U_\lambda$ is a two-dimensional
$\Lambda(q)$-module. Let $u_1,u_2$ denote the vectors $1$ in its vertex
spaces at $1,2$.
A decomposition into two nonzero direct summands would place these
two one-dimensional vertex spaces in different summands, forcing
$\beta_1$ to act by zero. Thus $U_\lambda$ is indecomposable. It is
nonprojective, since every nonzero projective module has dimension at least
$12$ by Proposition~\ref{prop:path-basis}.

\begin{proposition}
\label{prop:module-resolution}
For every $\lambda\in\kfield^\times$, there is an exact minimal sequence
\[
\begin{tikzcd}[column sep=3.4em]
0 \arrow[r] &
U_{q\lambda} \arrow[r,"\jmath_\lambda"] &
P_2 \arrow[r,"\alpha_1-\lambda\beta_1"] &
P_1 \arrow[r,"\pi_\lambda"] &
U_\lambda \arrow[r] &
0 .
\end{tikzcd}
\]
The map $P_2\to P_1$ is left multiplication by
$\alpha_1-\lambda\beta_1$, and $\pi_\lambda(e_1)=u_1$. On $U_{q\lambda}$, the map
$\jmath_\lambda$ sends $1\in U_{q\lambda}e_1$ to
$z=\beta_2\beta_3+q\lambda\alpha_2\alpha_3$ and
$1\in U_{q\lambda}e_2$ to $\omega_2$.
Consequently $\Omega^2(U_\lambda)\cong U_{q\lambda}$, and iteration gives
\begin{equation}
 \Omega^{2m}(U_\lambda)\cong U_{q^m\lambda}
 \qquad (m\geq0).
 \label{eq:syzygy-drift}
\end{equation}
\end{proposition}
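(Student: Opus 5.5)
The plan is to verify exactness of the sequence in four steps, using the path basis of Proposition~\ref{prop:path-basis} throughout: (a) the composites vanish, (b) the kernel of $\pi_\lambda$ has the right dimension, (c) the middle map has the right rank, and (d) $\jmath_\lambda$ is an injective module map onto its kernel.

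\textbf{Step 1: $\pi_\lambda$ and the composite.} Since $U_\lambda$ is generated by $u_1$, the assignment $e_1\mapsto u_1$ defines a surjection $\pi_\lambda\colon P_1\to U_\lambda$. Its kernel has dimension $12-2=10$. The composite $\pi_\lambda\circ(\alpha_1-\lambda\beta_1)$ sends $e_2$ to $u_1(\alpha_1-\lambda\beta_1)=(\lambda-\lambda)u_2=0$, so the composite vanishes.

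\textbf{Step 2: the image of the middle map equals $\ker\pi_\lambda$.} The image is $(\alpha_1-\lambda\beta_1)P_2$. I would compute it on the basis of $P_2$ and exhibit ten independent vectors.
\begin{itemize}
\item For $2\le l\le 5$ (suitable lengths), $\alpha_1 p'_{2,l}=p_{1,l+1}$. Here $\beta_1p'_{2,l}$ starts $\beta_1\beta_2$ and so is zero or of length at least seven.
\item Symmetrically, $\beta_1p_{2,l}=p'_{1,l+1}$, while $\alpha_1p_{2,l}$ lies in the span of $\omega_1$.
\item Low-length terms are handled by the relations \eqref{eq:alpha-relation}, \eqref{eq:beta-relation} and \eqref{eq:socle-paths}: $\alpha_1\alpha_2=q^{-1}p'_{1,5}$ and $\beta_1\beta_2=q\,p_{1,5}$.
\end{itemize}
With $e_2$ giving $\alpha_1-\lambda\beta_1$, a triangularity argument like the one in Proposition~\ref{prop:simple-resolution} should yield ten independent vectors. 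The image therefore has dimension $10$ and equals $\ker\pi_\lambda$. It follows that the kernel of the middle map has dimension $12-10=2$.

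\textbf{Step 3: the map $\jmath_\lambda$.} I would check three things.
\begin{itemize}
\item \emph{It is a module map.} First, $z\in P_2e_1$. Next, $z\alpha_1=\beta_2\beta_3\alpha_1+q\lambda\,\alpha_2\alpha_3\alpha_1$. Using \eqref{eq:triple-paths} with $i=2$, this gives $\varepsilon_3^{-1}\cdot q\lambda\,\omega_2=q\lambda\,\omega_2$, because $\beta\beta\alpha=0$ by \eqref{eq:zero-relations}. Similarly, $z\beta_1=\beta_2\beta_3\beta_1=\varepsilon_2\omega_2=\omega_2$, because $\alpha\alpha\beta=0$. These match the actions $\lambda'=q\lambda$ and $1$ on $U_{q\lambda}$, sending $u_1\mapsto u_2$. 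All other arrows must kill $z$ (there are no further arrows at vertex $1$ acting into $U_{q\lambda}e_3=0$), and $\omega_2$ is killed by every arrow.
\item \emph{It is injective.} The vectors $z$ and $\omega_2$ lie in different vertex components and are nonzero. For $z$, rewrite $\beta_2\beta_3=\varepsilon_2\,p_{2,5}$ and $\alpha_2\alpha_3=\varepsilon_2^{-1}p'_{2,5}$, both basis vectors.
\item \emph{Its image lies in the kernel.} We need $(\alpha_1-\lambda\beta_1)z=0$. The cross terms $\alpha_1\beta_2\beta_3$ and $\beta_1\alpha_2\alpha_3$ vanish by \eqref{eq:additional-zero}. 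What remains is $q\lambda\,\alpha_1\alpha_2\alpha_3-\lambda\,\beta_1\beta_2\beta_3=q\lambda\,\varepsilon_2^{-1}\omega_1-\lambda\,\varepsilon_1\omega_1=(q\lambda-q\lambda)\,\omega_1=0$.
\end{itemize}
This last computation is exactly where $q$ enters and forces the shift $\lambda\mapsto q\lambda$. The kernel has dimension $2$ by Step 2, so the image of $\jmath_\lambda$ is the whole kernel, which gives exactness.

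\textbf{Step 4: minimality and iteration.} The differentials lie in the radical, and $P_1\to U_\lambda$ is a projective cover because $U_\lambda$ has simple top $S_1$. Hence $\Omega^2(U_\lambda)\cong U_{q\lambda}$, and induction on $m$ gives \eqref{eq:syzygy-drift}.

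The main obstacle is the rank count in Step 2. Care is needed with basis vectors such as $p_{2,5}$ and $p'_{2,5}$, whose products with $\alpha_1$ or $\beta_1$ land in $\omega_1$ or vanish, to make sure the independent images really number ten.
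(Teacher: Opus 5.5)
Your proposal is correct and follows essentially the same route as the paper. Both arguments get rank ten for left multiplication by $\alpha_1-\lambda\beta_1$ from its images on the path basis of $P_2$ (using $\lambda\neq0$). They then identify the two-dimensional kernel $\kfield z\oplus\kfield\omega_2$, where $z=p_{2,5}+q\lambda p'_{2,5}$, with $U_{q\lambda}$ by the same relation computations.

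The differences are minor. The paper writes out the triangular count that you only sketch, using $h(p_{2,1})=q^{-1}p'_{1,5}-\lambda p'_{1,2}$, $h(p'_{2,1})=p_{1,2}-\lambda q p_{1,5}$ and the single socle image $\omega_1$. It reads $(\alpha_1-\lambda\beta_1)z=0$ off that table, whereas you check it directly from the triple-path identities.
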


\begin{proof}
Write $\pi=\pi_\lambda$, and let $h:P_2\to P_1$ be left
multiplication by $\alpha_1-\lambda\beta_1$. We first determine
$\ker\pi=\im h$, then identify $\ker h$ with $U_{q\lambda}$.

\medskip
\noindent\textbf{Step 1: The first projective cover.}
Since $u_1\beta_1=u_2$, the vector $u_1$ generates $U_\lambda$, so
$\pi$ is surjective and $\dim_\kfield\ker\pi=12-2=10$. Since
$\pi(\alpha_1-\lambda\beta_1)=\lambda u_2-\lambda u_2=0$,
we have $\im h\subseteq\ker\pi$. To prove equality, it suffices to
show that $\im h$ has dimension ten. The path basis gives
\begin{align*}
 h(e_2)&=\alpha_1-\lambda\beta_1,\\
 h(p_{2,1})&=q^{-1}p'_{1,5}-\lambda p'_{1,2},&
 h(p'_{2,1})&=p_{1,2}-\lambda q p_{1,5},\\
 h(p_{2,l})&=-\lambda p'_{1,l+1},&
 h(p'_{2,l})&=p_{1,l+1}\quad(2\leq l\leq4),\\
 h(p_{2,5})&=-\lambda q\omega_1,&
 h(p'_{2,5})&=\omega_1,\qquad h(\omega_2)=0.
\end{align*}
The six images with $2\leq l\leq4$ are nonzero multiples of the six
basis vectors of lengths three through five. Adjoin the images of
$e_2,p_{2,1},p'_{2,1},p'_{2,5}$. In a linear relation among these ten
vectors, comparing the coefficients of $\alpha_1,p'_{1,2},p_{1,2}$ and
$\omega_1$ forces the four added coefficients to be zero. Thus all ten
are independent. The table shows that they span $\im h$, so
$\dim_\kfield\im h=10$.
Thus $\im h=\ker\pi$. Since $\im h\subseteq\rad P_1$, the
surjection $\pi$ is a projective cover.

\medskip
\noindent\textbf{Step 2: The second syzygy.}
We first find a basis of the vector space $\ker h$. The element $z$ defined
in the statement can be written in the path basis as
\[
 z=\beta_2\beta_3+q\lambda\alpha_2\alpha_3
   =p_{2,5}+q\lambda p'_{2,5}.
\]
The two length-five paths are distinct basis vectors, so $z$ and
$\omega_2$ are independent. The displayed images give $h(z)=0$:
\[
 h(z)=-\lambda q\omega_1+q\lambda\omega_1=0.
\]
Also $h(\omega_2)=0$, and
$\dim_\kfield\ker h=12-10=2$. Therefore
$\ker h=\kfield z\oplus\kfield\omega_2$.
To identify this kernel as a module, we compute its vertex spaces and
arrow actions. The vertex positions are $z\in P_2e_1$ and
$\omega_2\in P_2e_2$.
By \eqref{eq:zero-relations} and \eqref{eq:triple-paths} at vertex $2$,
\begin{align*}
 z\alpha_1
 &=\beta_2\beta_3\alpha_1+q\lambda\alpha_2\alpha_3\alpha_1
 =q\lambda\omega_2,\\
 z\beta_1
 &=\beta_2\beta_3\beta_1+q\lambda\alpha_2\alpha_3\beta_1
 =\omega_2.
\end{align*}
Every arrow kills $\omega_2$, because paths of length seven vanish.
These actions agree with those defining $U_{q\lambda}$. Since $z,\omega_2$
form a basis of $\ker h$, the map $\jmath_\lambda$ is an isomorphism
from $U_{q\lambda}$ onto $\ker h$. Since $\ker h\subseteq\rad P_2$,
the surjection $P_2\to\ker\pi$ induced by $h$ is also a projective
cover. Hence $\Omega^2(U_\lambda)\cong U_{q\lambda}$, and iteration proves
\eqref{eq:syzygy-drift}.
\end{proof}

To show that the even syzygies are pairwise nonisomorphic, we use the
following vanishing for distinct parameters.

\begin{lemma}
\label{lem:hom-separation}
If $\lambda,\mu\in\kfield^\times$ and $\lambda\ne\mu$, then
$\Hom_{\Lambda(q)}(U_\lambda,U_\mu)=0$.
\end{lemma}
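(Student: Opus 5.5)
A module homomorphism $f\colon U_\lambda\to U_\mu$ respects the vertex idempotents, so it is determined by two scalars. Write $f(u_1)=a\,u_1'$ and $f(u_2)=b\,u_2'$ with $a,b\in\kfield$, where $u_1',u_2'$ denote the standard basis vectors of $U_\mu$ at vertices $1$ and $2$. Both vertex spaces are one-dimensional, and $U_\lambda e_3=U_\mu e_3=0$. So $f$ is given by the pair $(a,b)$, and the plan is to show that compatibility with the arrow actions forces $a=b=0$.

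The only arrows acting nontrivially on either module are $\alpha_1$ and $\beta_1$, both from vertex $1$ to vertex $2$. Every other arrow acts by zero on both modules, so compatibility with it is automatic.

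\begin{itemize}
\item Compatibility with $\beta_1$ reads $f(u_1\beta_1)=f(u_1)\beta_1$. Since $\beta_1$ acts by $1$ in both modules, this gives $b=a$.
\item Compatibility with $\alpha_1$ reads $f(u_1\alpha_1)=f(u_1)\alpha_1$. Since $\alpha_1$ acts by $\lambda$ on $U_\lambda$ and by $\mu$ on $U_\mu$, this gives $\lambda b=\mu a$.
\end{itemize}

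Substituting $b=a$ into the second equation yields $(\lambda-\mu)a=0$. Because $\lambda\ne\mu$, we get $a=0$, and hence $b=0$. Therefore $f=0$, so $\Hom_{\Lambda(q)}(U_\lambda,U_\mu)=0$.

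There is no real obstacle here. The only point requiring care is the convention that arrows act on the right, so that $\alpha_1$ maps $U e_1$ to $U e_2$. This convention makes the two compatibility equations above exactly the conditions for $f$ to commute with the arrow actions.
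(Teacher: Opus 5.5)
Your proof is correct and is essentially the paper's own argument. Like the paper, you record a homomorphism as a pair of vertex scalars, read off $a=b$ from the action of $\beta_1$ and $\lambda b=\mu a$ from the action of $\alpha_1$, and conclude that both scalars vanish since $\lambda\ne\mu$.
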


\begin{proof}
A homomorphism is given by scalars $s_1,s_2$ at vertices $1,2$.
Compatibility with the actions of $\beta_1$ and $\alpha_1$ gives
$s_1=s_2$ and $\lambda s_2=\mu s_1$, respectively. Since $\lambda\ne\mu$,
both scalars are zero.
\end{proof}

When $q$ has infinite order and $\lambda\ne0$, the scalars
$q^m\lambda$ for $m\geq0$ are pairwise distinct. Proposition~\ref{prop:module-resolution} and
Lemma~\ref{lem:hom-separation} give the following sequence of pairwise
nonisomorphic modules, with each term the second syzygy of the preceding one:
\[
\begin{tikzcd}[column sep=3.7em]
U_\lambda
 \arrow[r,mapsto,"\Omega^2"] &
U_{q\lambda}
 \arrow[r,mapsto,"\Omega^2"] &
U_{q^2\lambda}
 \arrow[r,mapsto,"\Omega^2"] &
\cdots .
\end{tikzcd}
\]

We now combine the path basis, the simple resolutions, and this sequence
to prove the theorem announced in the introduction.

\begin{theorem}
\label{thm:main}
Let $\kfield$ be algebraically closed, and let $q\in\kfield^\times$ have
infinite multiplicative order. Then $\Lambda(q)$ has dimension $36$, and
every simple right $\Lambda(q)$-module has period four. The algebra
$\Lambda(q)$ is twisted periodic but not periodic. In particular, the
periodicity conjecture is false.
\end{theorem}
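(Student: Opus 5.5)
The proof assembles results already established; essentially no new computation is needed.

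The dimension statement is Proposition~\ref{prop:path-basis}. Proposition~\ref{prop:simple-resolution} gives, for each $i\in\mathbb Z/3\mathbb Z$, a minimal exact sequence showing $\Omega^4(S_i)\cong S_i$, with intermediate syzygies of dimensions $11,13,11$. These differ from $\dim S_i=1$, so the period of $S_i$ is exactly four. The period-four claim holds for every $q\in\kfield^\times$, since only the construction of the resolution was used.

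For twisted periodicity, I note that $\Lambda(q)$ is indecomposable because $Q$ is connected and the ideal is admissible, as remarked after Proposition~\ref{prop:path-basis}. Also $\kfield$ is algebraically closed, and every simple has period four, so the least common $n$ is $4$. Theorem~\ref{thm:gss} then yields \eqref{eq:twisted-period}, so $\Lambda(q)$ is twisted periodic.

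For nonperiodicity I will argue by contradiction. Suppose $\Omega^n_{\Lambda(q)^{\ee}}(\Lambda(q))\cong\Lambda(q)$ for some $n>0$. By \cite[Theorem~IV.11.19(ii)]{SkowronskiYamagata}, every indecomposable nonprojective module is then periodic. Fix $\lambda=1$. The module $U_1$ is indecomposable and nonprojective, as shown before Proposition~\ref{prop:module-resolution}, so $\Omega^N(U_1)\cong U_1$ for some $N>0$. Replacing $N$ by $2N$, I may assume $N=2m$ with $m\ge1$. By \eqref{eq:syzygy-drift} this gives $U_{q^m}\cong U_1$. Since $q$ has infinite order, $q^m\ne1$. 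Lemma~\ref{lem:hom-separation} then gives $\Hom(U_1,U_{q^m})=0$, which is impossible for isomorphic nonzero modules.

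Hence $\Lambda(q)$ is twisted periodic but not periodic, and since it is finite-dimensional over a field, this contradicts Conjecture~\ref{conj:periodicity}.

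The only point needing care is citing the Skowroński--Yamagata result correctly. Its mechanism is that $\Omega^n_{A}(M)\cong M\otimes_A\Omega^n_{A^{\ee}}(A)$ up to projective summands, and for an indecomposable nonprojective $M$ over a self-injective $A$ this gives periodicity. Self-injectivity of $\Lambda(q)$ is not needed separately, because periodic algebras are automatically self-injective; this is the implicit content of the cited theorem. If a direct argument were preferred, I would tensor the bimodule resolution with $U_1$: the terms stay projective and the sequence stays exact, since it splits as a sequence of one-sided modules. Minimality up to projective summands then gives $\Omega^n(U_1)\cong U_1$ after discarding projective summands. Because $\Omega^n(U_1)$ is indecomposable of dimension two, there are no such summands.
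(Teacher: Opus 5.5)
Your proof is correct and follows the paper's argument. Dimension, period four and twisted periodicity come from Propositions~\ref{prop:path-basis} and \ref{prop:simple-resolution} and Theorem~\ref{thm:gss}, and nonperiodicity comes from $U_1\cong\Omega^{2m}(U_1)\cong U_{q^m}$ contradicting Lemma~\ref{lem:hom-separation}, combined with \cite[Theorem~IV.11.19(ii)]{SkowronskiYamagata}.

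One small slip in your optional direct argument: the reason there are no projective summands is that $U_1\cong U_1\otimes_{\Lambda(q)}\Lambda(q)\cong\Omega^n(U_1)\oplus Q$ with $U_1$ indecomposable and nonprojective (or simply $\dim U_1=2<12$). It is not that $\Omega^n(U_1)$ is two-dimensional, which fails for odd $n$, since $\Omega(U_\lambda)$ is ten-dimensional.
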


\begin{proof}
Propositions~\ref{prop:path-basis} and \ref{prop:simple-resolution}
give the dimension and the periods of the simple modules, and
\eqref{eq:twisted-period} gives twisted periodicity. It remains to prove
that $\Lambda(q)$ is not periodic. Fix
$\lambda\in\kfield^\times$. If
$U_\lambda$ were periodic, say
$\Omega^p(U_\lambda)\cong U_\lambda$ for $p>0$, then
\[
 U_\lambda\cong\Omega^{2p}(U_\lambda)\cong U_{q^p\lambda}
\]
by Proposition~\ref{prop:module-resolution}. This is impossible: since $q$ has infinite order and $\lambda\ne0$,
$q^p\lambda\ne\lambda$, and Lemma~\ref{lem:hom-separation} gives
\[
 \Hom_{\Lambda(q)}(U_{q^p\lambda},U_\lambda)=0.
\]
Thus $U_\lambda$ is not periodic. Since $U_\lambda$ is indecomposable
and nonprojective, \cite[Theorem~IV.11.19(ii)]{SkowronskiYamagata}
implies that $\Lambda(q)$ is not periodic. Hence $\Lambda(q)$ is
a counterexample to Conjecture~\ref{conj:periodicity}.
\end{proof}

\end{document}